\documentclass[12pt,leqno,a4paper]{article}
\usepackage{amssymb}
\usepackage{amsmath}
\usepackage{amsthm}
\usepackage{color}
\usepackage[pdfborder={0 0 0}]{hyperref}
\usepackage[shortlabels]{enumitem}
\usepackage{mathrsfs}							
\usepackage{bbm}									
\usepackage{esint}								
\DeclareMathOperator*{\argmax}{arg\,max}
\newtheorem{theorem}{Theorem}

\newtheorem{corollary}[theorem]{Corollary}

\theoremstyle{remark}

\theoremstyle{definition}
\newtheorem{remark}[theorem]{Remark}

\numberwithin{equation}{section}
\numberwithin{theorem}{section}

\newcommand{\R}{\mathbb{R}}

\newcommand{\cC}{{\mathcal C}}

\DeclareMathOperator{\sign}{sign}

\DeclareMathOperator{\divv}{div}

\begin{document}
\title{A variational problem for the maximization of energy dissipation during turbulent fluid mixing in Boussinesq approximation}
\author{ J\'ozsef J. Kolumb\'an\footnote{Department of Analysis and Operations Research, Institute of Mathematics, Budapest University of Technology and Economics, Műegyetem rkp. 3., H-1111 Budapest, Hungary,  and
HUN-REN Alfr\'ed R\'enyi Institute of Mathematics, 1053 Budapest, Re\'altanoda utca 13-15, Hungary,
jkolumban@math.bme.hu, Corresponding author}\quad
and Marietta Oroszki \footnote{Department of Analysis and Operations Research, Institute of Mathematics, Budapest University of Technology and Economics, Műegyetem rkp. 3., H-1111 Budapest, Hungary, 
mariettaoroszki@gmail.com}}
\date{}
\maketitle

\begin{abstract}

We study the Boussinesq approximation of the Rayleigh-Taylor instability with local energy inequality, within the convex integration plus variational selection criterion framework of \cite{RTE}. We show that, under an appropriate rescaling, the energy dissipation functionals obtained in said paper $\Gamma$-converge, and that their limit has a unique maximizer, corresponding to the profile also obtained in \cite{GK_Boussinesq}.

\end{abstract}

\section{Introduction and main results}

The Boussinesq approximation is often used in fluid mechanics to describe the evolution of inhomogeneous fluid flow, when the density difference is small. In particular, we consider two incompressible fluids with homogeneous densities $0<\rho_-<\rho_+$ under the influence of a gravitational force. The system is described by the incompressible 
 Euler equations in Boussinesq approximation
\begin{align}\label{eq:bou}
\begin{split}
\partial_t v +\divv (v\otimes v) +\nabla p&=-\rho gA e_n,\\
\divv v&=0,\\
\partial_t \rho + \divv (\rho v)&=0,
\end{split}
\end{align}
together with the energy (in)equality
\begin{align}\label{eq:euler_local_energy_inequality}
    \partial_t \left(\frac{1}{2}|v|^2+\rho g A x_n\right)+\divv \left(\left(\frac{1}{2}|v|^2+\rho g A x_n+p\right)v\right)&\leq 0,
\end{align}
on $\Omega\times (0,T)$, where
$\Omega\subset\R^n$ is a periodic channel $\mathbb T^{n-1}\times(-L,L)$, $n=2$ or $n=3$, and  $T>0$ is the final time. The unknowns are the normalized fluid density $\rho:\Omega\times[0,T)\rightarrow\R$ ( i.e. $\rho\in\{\pm 1\}$ a.e.), the fluid velocity $v:\Omega\times[0,T)\rightarrow\R^n$ and the fluid pressure $p:\Omega\times[0,T)\rightarrow\R$. We also use $e_n\in\R^n$ to denote the $n$-th coordinate vector, $g>0$ the gravitational constant, as well as 
\[
A:=\frac{\rho_+-\rho_-}{\rho_++\rho_-}
\]
 the so-called Atwood number, which will satisfy $0<A<<1$. As usual for the incompressible Euler equations, one considers no-penetration boundary conditions of the form
\begin{equation}\label{eq:boundary_condition}
v\cdot \vec{n} =0 \quad \text{on }\partial\Omega\times[0,T),
\end{equation}
where $\vec{n}$ denotes the exterior unit normal of the boundary of $\Omega$. 

We are interested in the unstable initial configuration which gives rise to the so-called Rayleigh-Taylor instability (see Rayleigh \cite{Rayleigh} and Taylor \cite{Taylor}), when the heavier fluid is layered on top of the lighter fluid (while gravity is pointing downwards), i.e.
\begin{align}\label{eq:initial_data}
\rho(x,0)=\sign(x_n),\quad v(x,0)=0,\quad x\in\Omega.
\end{align}

In \cite{RTE}, the first author together with B. Gebhard considered a similar problem for the inhomogeneous Euler equations without Boussinesq approximation (hence for arbitrary Atwood number). They constructed a convex integration framework to prove the existence of infinitely many weak solutions of the system, which obey a local energy dissipation inequality, and weakly converge to a so-called "subsolution" (which can be seen as an averaged flow). Furthermore, the authors also
 provided a variational problem as selection criterion for the horizontally averaged profiles, which maximized total energy dissipation.

Indeed, as already observed by Lax \cite{Lax}, taking weak limits of weak solutions of PDEs can be seen as a deterministic alternative to ensemble averaging. Ever since the pioneering work of De Lellis and Sz\'ekelyhidi Jr. \cite{DeL-Sz-Annals,DeL-Sz-Adm} in applying convex integration to prove non-uniqueness of weak solutions of the hydrodynamical Euler equations, one of the main ensuing questions that emerged in the research community was how to construct the most "physical" subsolutions, i.e. the averaged flows that best reflected observed physical reality. This was of particular interest in the case of modelling turbulent fluid mechanical instabilities, such as the Kelvin-Helmholtz \cite{GKEE,Mengual_Sz_vortex_sheet,Sz-KH} and Rayleigh-Taylor instabilities \cite{GKSz,GK_Boussinesq,RTE} or the Muskat problem \cite{Castro_Cordoba_Faraco,Castro_Faraco_Mengual,Castro_Faraco_Mengual_2, Foerster-Sz,Mengual,Mengual_Sz_vortex_sheet,Noisette-Sz,Sz-Muskat}. Many of these approaches also allowed to deduce meaningful quantitative information about the models, such as the maximal growth rate of the turbulent mixing zone, which corresponded to the growth rates observed in physical experiments and numerical simulations (for instance linear growth rate with respect to time for the Kelvin-Helmholtz instability and Muskat problem, and quadratic growth rate for the Rayleigh-Taylor instability).

In Remark 1.2 and Section 1.5.2 of \cite{RTE} it was noted that if one were to apply the limiting transformation corresponding to the Boussinesq approximation to the averaged density profile associated with the maximal energy dissipation, one would get back the same profile that was obtained by a different selection criterion (of maximizing initial energy dissipation) from \cite{GK_Boussinesq}. However, the maximization of energy dissipation in the Boussinesq limit was not rigorously studied in \cite{RTE}.

The main goal of this paper is to fill this gap. Indeed, even if for any $A>0$, we had $F_A=\argmax J_A$ for appropriate energy dissipation functionals $J_A$, and we knew $J_A \to J_{Bou}$ and $F_A\to F_{Bou}$, we could not conclude a priori $F_{Bou}=\argmax J_{Bou}$, in absence of $\Gamma-$convergence or similar criteria. But here the situation is actually more complicated, because within the results of \cite{RTE}, it was only shown that the density profile $\rho_A$ converges to $\rho_{Bou}$, and $\rho_A$ were related to the maximizers $F_A$ via a Riemann-problem. Furthermore, as we will see in Section \ref{sec:scale}, a priori for the construction from \cite{RTE}, there would actually hold $F_A\to 0$, which would not be meaningful in the Boussinesq case. We will address this issue via a rescaling in said section.


For now, we wish to conclude this introduction by stating our main results, for which we introduce the following notation.

Let us denote $a:[-1,1]\to\R$, $$a(r)=1-r^2.$$
We will show in Section \ref{sec:scale} that the total energy dissipation will be given by $ C(t,g,\rho_\pm) J(F)$ with
\begin{align}\label{eq:func0}
J(F) = \int_{-1}^1 \left[ \frac{d}{dr} \left( \frac{F^2}{a(r)}\right) F'(r) + \frac{1}{2}(F'(r))^2 \right] dr,
\end{align}
and $C(t,g,\rho_\pm)<0$ being a constant that depends on $t$, $g$ and $\rho_\pm$.

The first main result of our paper will be the following variational maximization principle of the energy dissipation, and it will be proven in Section \ref{sec:proof2}.

\begin{theorem}\label{thm:var}
Let $$\mathcal F = \Bigl\{ F:[-1,1]\to\mathbb{R} \,\Big|\, F(-1)=F(1)=0,\; F \;\text{convex}\Bigr\},$$
    then there holds
$$\argmax_{F\in\mathcal F}J = -\frac{1}{3}a(r).$$
\end{theorem}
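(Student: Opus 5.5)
The plan is to exploit the homogeneity of $J$: I would split it into a cubic and a quadratic part and control the cubic part by the quadratic one through a sharp inequality whose extremals are parabolas. Write $J(F)=C(F)+Q(F)$ with
\[
C(F)=\int_{-1}^1 \frac{d}{dr}\Big(\frac{F^2}{a}\Big)F'\,dr,\qquad Q(F)=\frac12\int_{-1}^1 (F')^2\,dr .
\]
Since $F\in\mathcal F$ is convex with $F(\pm1)=0$, we have $F\le 0$ and $F''\ge 0$ as a measure. First I restrict to those $F$ for which $J(F)$ is finite, in particular $F'\in L^2$. A short argument should show that $F^2/a\cdot F'\to 0$ at $\pm1$: $F'$ is monotone and $F^2\lesssim (1\mp r)\int (F')^2$ near the endpoints. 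Integrating by parts then gives
\[
C(F)=-\int_{-1}^1 \frac{F^2}{a}\,dF''\le 0,\qquad \int_{-1}^1 (F')^2\,dr=\int_{-1}^1 |F|\,dF'' .
\]
If $C(F)=0$, then $F''$ vanishes on $\{F\neq0\}$, which forces $F\equiv 0$ and $J(F)=0$. So assume $C(F)<0$.

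Next comes the key inequality. Cauchy--Schwarz with respect to the measure $F''$ gives
\[
\Big(\int|F|\,dF''\Big)^2\le \int\frac{F^2}{a}\,dF''\cdot\int a\,dF'' .
\]
Integrating by parts twice, with $a(\pm1)=0$, $F(\pm1)=0$ and $a''=-2$, yields $\int a\,dF''=2\int|F|\,dr$. By Cauchy--Schwarz, $\int|F|=\int \tfrac{-a'}{2}\,\tfrac{F'}{1}\cdot(-1)$ after one integration by parts, and this is bounded by $c\,(\int (F')^2)^{1/2}$. The optimal constant is attained exactly when $F'\propto a'$, i.e.\ $F=-ka$, so $c=(4/3)/\sqrt{8/3}$. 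Combining these,
\[
(2Q)^2\le |C|\cdot 2c\,(2Q)^{1/2},\qquad\text{i.e. } Q^{3/2}\le K\,|C|
\]
for an explicit $K$. Equality holds iff $|F|/a$ is constant on $\operatorname{supp}F''$ and $F=-ka$; both conditions are satisfied by parabolas, and only by them.

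Now I optimize the scaling. For fixed shape, $\lambda\mapsto J(\lambda F)=\lambda^3C+\lambda^2Q$ with $\lambda>0$ (convexity restricts us to $\lambda\ge0$) is maximized at $\lambda=-2Q/(3C)$, with value $\frac{4Q^3}{27C^2}$. Hence
\[
J(F)\le \sup_\lambda J(\lambda F)=\frac{4}{27}\frac{Q^3}{C^2}\le \frac{4K^2}{27},
\]
with equality iff $F=-ka$ and $\lambda$ is optimal. A direct computation for $F=-ka$ gives $C=-\tfrac{8}{15}k^3\cdot$(explicit) and $Q=\tfrac43k^2$. The one-variable maximization in $k$ then gives $k=1/3$, where $J(-a/3)=\int_{-1}^1 \tfrac{2r^2}{27}\,dr=\tfrac{4}{81}>0=J(0)$. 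This yields existence of the maximum and uniqueness of the maximizer $-\tfrac13 a$.

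I expect the main obstacle to be the rigour of the integrations by parts for general convex $F$. Here $F''$ is only a measure, $F'$ may blow up at $\pm1$, and $1/a$ is singular at the endpoints. I would handle this by integrating on $[-1+\varepsilon,1-\varepsilon]$ and checking that the boundary terms vanish as $\varepsilon\to0$. This uses the finiteness of $\int (F')^2$; if that integral is infinite, I would first show $J(F)\le 0$ or $J(F)=-\infty$ via the sign of $C$. The equality analysis in the measure-valued Cauchy--Schwarz step also needs care.
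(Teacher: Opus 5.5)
Your proposal is correct on the class where the statement actually holds (see the second paragraph), and it is a genuinely different proof from the paper's. The paper substitutes $F=-aH$ and rewrites $J$ as $\tilde J(H)=\frac12\int_{-1}^1[a^2(1-4H)(H')^2-aa''H^2(1-2H)]\,dr$. It then compares with $\tilde J(1/3)$ separately on two sets:
\begin{itemize}
\item on $\{H>1/4\}$, by a pointwise bound using $H^2(1-2H)\le 1/27$;
\item on each component of $\{H<1/4\}$, by integrating by parts and using $(aH)''\le 0$ and $H(1-2H)\le 1/8$, which gives the non-sharp but sufficient constant $\frac1{64}<\frac1{54}$.
\end{itemize}
You instead use that $J=C+Q$ is cubic plus quadratic and that $\mathcal F$ is a cone. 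Everything then reduces to the sharp scale-invariant inequality $Q^3\le C^2/3$ (your $K=1/\sqrt3$), which comes from two Cauchy--Schwarz inequalities. Then $J(F)\le \frac{4Q^3}{27C^2}\le\frac{4}{81}=J(-\frac13 a)$, and the equality cases give existence and uniqueness at once, with no level-set splitting and no $C^2$ approximation of $aH$.

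Your route also explains the factor $\frac13$: for $F=-k\alpha$ the optimal dilation is $\lambda^*=\frac1{3k}$, whatever the weight $\alpha$. If you use $\int a\,dF''=-\int a'F'\,dr\le\|a'\|_{L^2}\|F'\|_{L^2}$ directly instead of going through $a''=-2$, nothing depends on the particular $a$ beyond $a>0$ inside and $a(\pm1)=0$. So the same argument shows that $-\frac13\tilde\alpha_A$ is the unique maximizer of each $J_A$ without appealing to \cite{RTE}. There is a minor slip: for $F=-ka$ one has $C=-\frac83k^3$ and $Q=\frac43k^2$.

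The part of your plan that cannot work is the treatment of non-Lipschitz $F$. You propose to show $J(F)\le 0$ or $J(F)=-\infty$ whenever $\int(F')^2=\infty$. But $F=-\sqrt{1-r^2}$ is convex with $F(\pm1)=0$ and satisfies $F^2/a\equiv1$, hence $C(F)=0$. Meanwhile $\int_{-1}^1(F')^2\,dr=\int_{-1}^1\frac{r^2}{1-r^2}\,dr=\infty$, so $J(F)=+\infty$. The statement is therefore only true on the class the paper tacitly uses, namely Lipschitz convex $F$; this is its claim $|F|\lesssim 1-r^2$, i.e.\ $H=-F/a\in L^\infty$ in $\mathcal H$. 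On that class $F'$ is bounded and $F^2/a\lesssim 1-|r|$. All boundary terms in your integrations by parts then vanish trivially, and the obstacle you anticipated disappears.

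Your sketch for $F^2F'/a\to 0$ at the endpoints also uses only $F'\in L^2$, which is not enough in general. For example, $F=-(1-r^2)^{p}$ with $p\in(\frac12,\frac23)$ is convex and has $F'\in L^2$, but $F^2F'/a\to\infty$; there $C(F)=-\infty$, so finiteness of $C$ would have to be used as well.
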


From the discussion in Section \ref{sec:scale}, one may then deduce the following Corollary on the level of subsolutions/averaged turbulent flows, in the spirit of Proposition 1.4 from \cite{RTE}.

\begin{corollary}\label{cor:lambda_1_3}
Among all one-dimensional subsolutions (which can be seen as the horizontally averaged profiles) with $\bar{\rho}$ given as the entropy solution to a Riemann problem 
\begin{align}\label{eq:general_hyperbolic_conservation_law_prop14}
    \partial_t\bar{\rho}+gAt\partial_{x_n} (F(\bar{\rho}))=0,\quad \bar{\rho}(\cdot,0)=\rho_0,
\end{align}
with $F\in\cC^2([-1,1])$ uniformly convex and $F(\pm 1)=0$, the induced total energy dissipation $E(0)-E(t)$ is maximal for $F=-\frac{1}{3}a$.
\end{corollary}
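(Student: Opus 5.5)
The plan is to reduce the Corollary to Theorem~\ref{thm:var}, using the rescaling and energy computation of Section~\ref{sec:scale}. There are three steps: identify the entropy solution of the Riemann problem \eqref{eq:general_hyperbolic_conservation_law_prop14} explicitly, express the total energy dissipation of the associated one-dimensional subsolution as $C(t,g,\rho_\pm)J(F)$, and then invoke Theorem~\ref{thm:var} on the admissible class.

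\emph{Step 1: the Riemann problem.} Let $F\in\cC^2([-1,1])$ be uniformly convex with $F(\pm1)=0$, and take initial datum $\rho_0=\sign(x_n)$. Introduce the rescaled time $\tau=gAt^2/2$, so that \eqref{eq:general_hyperbolic_conservation_law_prop14} becomes $\partial_\tau\bar\rho+\partial_{x_n}F(\bar\rho)=0$. The left state $-1$ is below and the right state $+1$ above, and $F'$ is increasing. Hence the entropy solution is a single rarefaction fan:
\[
\bar\rho(x,t)=(F')^{-1}\!\left(\frac{x_n}{\tau}\right) \quad\text{for } \tau F'(-1)<x_n<\tau F'(1),
\]
with $\bar\rho=\pm1$ outside the fan. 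On the fan we use $r=\bar\rho$ as the coordinate, via $x_n=\tau F'(r)$ and $dx_n=\tau F''(r)\,dr$. This map is a $\cC^1$-diffeomorphism precisely because $F''>0$, which is where uniform convexity is used.

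\emph{Step 2: the energy identity.} The potential energy change $gA\int(\bar\rho-\sign(x_n))x_n\,dx$ transforms into an integral in $r$ of $F'F''$-type terms. The kinetic part of the subsolution comes from the averaged momentum and the energy density of Section~\ref{sec:scale}, with the remaining free components chosen optimally as in Proposition~1.4 of \cite{RTE}. This part produces the term $\frac{d}{dr}\bigl(F^2/a\bigr)F'$. After integrating by parts in $r$, all boundary terms vanish because $F(\pm1)=0$, and $F^2/a$ stays bounded at $r=\pm1$ since $F$ vanishes there to first order. The result is
\[
E(0)-E(t)=C(t,g,\rho_\pm)J(F),
\]
which is exactly the formula announced before \eqref{eq:func0}. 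We take the orientation from Section~\ref{sec:scale}, namely that larger dissipation corresponds to the maximization in Theorem~\ref{thm:var}. I expect this step to be the main obstacle. The difficulty is the careful bookkeeping of the kinetic/Reynolds-stress contribution and of the boundary terms at the edges of the mixing zone, where $a$ vanishes.

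\emph{Step 3: conclusion.} Every admissible $F$ in the Corollary belongs to $\mathcal F$. Conversely, $F=-\frac13 a$ is admissible: it is $\cC^2$, satisfies $F''\equiv\frac23>0$, and vanishes at $\pm1$. Theorem~\ref{thm:var} states that $-\frac13 a$ is the unique maximizer of $J$ over the larger class $\mathcal F$. Hence it is also the unique maximizer over the subclass of uniformly convex $\cC^2$ fluxes. By the identity of Step~2, the induced dissipation $E(0)-E(t)$ is therefore maximal exactly for $F=-\frac13 a$, for every $t>0$.
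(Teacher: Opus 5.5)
\textbf{There is a genuine gap in your Step 2.} Step 3 is correct. Step 1 and the potential-energy half of Step 2 are also fine: after integrating by parts you get a positive multiple of $\int_{-1}^1 (F')^2\,dr$, which incidentally fixes the sign of the prefactor. The problem is the other half of Step 2, which is the substantive part of the Corollary beyond Theorem \ref{thm:var}. You assert that the kinetic part ``produces the term $\frac{d}{dr}(F^2/a)F'$'', citing ``the averaged momentum and the energy density of Section \ref{sec:scale}'' and the optimal choice in Proposition 1.4 of \cite{RTE}. Neither supports this. Section \ref{sec:scale} contains no Boussinesq subsolution, momentum or energy density at all. The optimization in \cite{RTE} is carried out for the full inhomogeneous system, with density-weighted energy, weight $\alpha(\rho)$ and a domain transformation, so its optimal choice cannot simply be transplanted to \eqref{eq:bou}. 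A direct relaxation of \eqref{eq:bou}, with a constraint like $\lambda_{\max}(M(z))\le\frac{2}{n}e$, is exactly what the Remark on convex integration in the introduction says is \emph{not} carried out and is only conjectured. As written, $E(0)-E(t)=C\,J(F)$ is assumed, not derived.

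\textbf{The missing idea} is to obtain this identity from \cite{RTE} by a change of variables and a limit, not from a new relaxation.
\begin{enumerate}[(i)]
\item Given an admissible $F$, set $G(\rho):=\frac{\rho_++\rho_-}{2}A^2F(r)$, with $\rho=\rho(r)$ as in \eqref{eq:bij}.
\item Then $G\in\cC^2$ is uniformly convex, since $G''(\rho)=\frac{2}{\rho_++\rho_-}F''(r)$, and $G(\rho_\pm)=0$. So $G$ is admissible in \cite{RTE}; this is where uniform convexity really enters.
\item The Riemann problem $\partial_t\rho+gt\,\partial_{x_n}G(\rho)=0$ is equivalent to \eqref{eq:general_hyperbolic_conservation_law_prop14} for $r$.
\item By \eqref{eq:diffch}, the formula \eqref{eq:diss} becomes $E(0)-E(t)=\frac{(\rho_++\rho_-)g^3A^3t^4}{8}J_A(F)$, with $J_A$ as in \eqref{eq:scale}.
\item Step 1 of the proof of Theorem \ref{thm:gamma} gives $|J_A(F)-J(F)|\lesssim\|F\|_{W^{1,\infty}}^3A$.
\end{enumerate}
Hence in the Boussinesq limit the rescaled dissipation is exactly $J(F)$, with a positive prefactor. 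Only at that point does Theorem \ref{thm:var} yield the claim.
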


Finally, in Section \ref{sec:proof} we will also show the following convergence result for the (appropriately rescaled) energy dissipation functionals $J_A$ (which are rigorously defined in Section \ref{sec:scale}) for the inhomogeneous Euler equations obtained from \cite{RTE}, in the Boussinesq approximation limit.

 \begin{theorem}\label{thm:gamma}
 The (appropriately rescaled) energy dissipation functionals $\Gamma-$converge in the Boussinesq limit,
 that is, under the definitions of $J$ and $J_A$ as given in \eqref{eq:func0}, respectively \eqref{eq:scale},
 there holds $$-J_A\xrightarrow{\Gamma(C^0)}-J,\text{ as }A\to 0^+.$$
 \end{theorem}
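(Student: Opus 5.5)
The plan is to prove the two halves of $\Gamma$-convergence separately. I take the functionals $J_A$ from \eqref{eq:scale} to have the same structure as $J$ in \eqref{eq:func0}, namely
$$J_A(F)=\int_{-1}^1 \Bigl[\tfrac{d}{dr}\bigl(\tfrac{F^2}{a_A}\bigr)F'+\tfrac12 (F')^2\Bigr]dr,$$
with $a_A$ an $A$-dependent weight satisfying $a_A\to a$ uniformly on $[-1,1]$ and $a_A$ comparable to $a$ near $r=\pm1$, uniformly in $A$. I have not used the explicit form of \eqref{eq:scale}, so this assumption has to be checked against it. The functionals are considered on $C^0([-1,1])$ and set equal to $-\infty$ (so $-J_A=+\infty$) outside $\mathcal F$. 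Since $\mathcal F$ is closed under uniform convergence, any sequence $F_A\to F$ in $C^0$ with $\sup_A J_A(F_A)>-\infty$ has $F\in\mathcal F$. So it suffices to prove, for $F\in\mathcal F$:
\begin{enumerate}[(i)]
\item the $\liminf$ inequality $\limsup_{A\to0^+} J_A(F_A)\le J(F)$ whenever $F_A\to F$ uniformly;
\item existence of a recovery sequence.
\end{enumerate}

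First I would rewrite the functionals in terms of the nonnegative Radon measure $\mu=F''$, which is available for convex $F$. For $F\in\mathcal F$ one has $F\le0$, $|F(r)|\le C\,a(r)$ near $\pm1$ when $F$ has finite one-sided slopes, and $F^2/a\to0$ at $\pm1$. Integrating by parts on $[-1+\delta,1-\delta]$ and letting $\delta\to0$ should give
$$J(F)=\int_{(-1,1)}\phi(F(r),r)\,d\mu(r),\qquad \phi(s,r)=-\frac{s^2}{a(r)}-\frac{s}{2},$$
using $\int (F')^2=-\int F\,dF''$ once the boundary terms $FF'$ and $(F^2/a)F'$ vanish. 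The same identity holds for $J_A$ with $\phi_A$ built from $a_A$. Justifying that these boundary terms vanish is a first technical point. If $\int (F')^2=\infty$, I would check that both sides equal $\pm\infty$ consistently, or restrict to $F$ with finite energy, since $J(F)=-\infty$ is harmless for the $\liminf$ inequality.

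For the recovery sequence I take $F_A\equiv F$. Then $J_A(F)-J(F)=\int(\phi_A-\phi)(F,r)\,d\mu$. The integrand tends to zero pointwise. It is dominated near the endpoints by $C F^2/a$ because $a_A\sim a$, and $\int F^2/a\,d\mu<\infty$ whenever $J(F)>-\infty$. Dominated convergence then gives $J_A(F)\to J(F)$.

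For the $\liminf$ inequality, uniform convergence of convex functions gives $\mu_A=F_A''\rightharpoonup\mu$ weakly-$*$ on every compact $K\subset(-1,1)$, and $\phi_A(F_A,\cdot)\to\phi(F,\cdot)$ uniformly on $K$. Hence $\int_K\phi_A(F_A,\cdot)\,d\mu_A\to\int_K\phi(F,\cdot)\,d\mu$, after choosing $K=[-1+\delta,1-\delta]$ with $\mu(\{\pm(1-\delta)\})=0$.

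The main obstacle is the behaviour near the endpoints: $F_A'$ is not uniformly bounded there, so mass of $\mu_A$ could concentrate at $\pm1$. I need
$$\limsup_{\delta\to0}\;\limsup_{A\to0}\int_{1-\delta}^1\phi_A(F_A,r)\,d\mu_A\le0.$$
Since $F_A\le0$, the weight $\phi_A=-F_A\bigl(\tfrac12+\tfrac{F_A}{a_A}\bigr)$ is nonnegative exactly where $F_A/a_A\ge-\tfrac12$, so in that region a concentrating tail would help rather than hurt. I would first try to control the tail directly, going back to the original form of the integrand and integrating by parts only on $(1-\delta,1)$. This gives
$$\int_{1-\delta}^1\Bigl[\tfrac12(F_A')^2+\tfrac{2F_A}{a_A}(F_A')^2+\tfrac{2rF_A^2}{a_A^2}F_A'\Bigr]dr,$$
and I would bound it using $|F_A|\le\|F_A-F\|_\infty+|F|$ and $|F(r)|=o(1)$ as $r\to1$. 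The aim is to show that near the endpoints the negative term $\tfrac{2F_A}{a_A}(F_A')^2$ dominates the positive one unless $F_A/a_A$ is small. If this direct estimate fails, the fallback is to show that a near-maximizing sequence can be modified near $\pm1$, replacing $F_A$ by its linear interpolation, without decreasing $J_A$ by more than $o(1)$. That modification keeps the slopes bounded and restores compactness of $\mu_A$ on all of $[-1,1]$.
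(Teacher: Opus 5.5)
You have correctly located the one non-routine point, but you have not proved it. As a result, the $\liminf$ inequality is still open.

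\textbf{The endpoint tail.} Uniform convergence of convex functions does \emph{not} bound $F_A'$ near $\pm1$. The paper asserts that it does in Step~3, and uses this for the $C^0$-continuity of $J$ in Step~2; both claims are false. For instance, $F_n=\max\{-\tfrac1n,\,n(r-1),\,-n(r+1)\}\to0$ uniformly, while $J(F_n)\approx 1-n\to-\infty$. So only upper semicontinuity can hold.

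Your tail estimate $\limsup_{\delta}\limsup_{A}\int_{(1-\delta,1)}\phi_A\,d\mu_A\le0$ is only announced. The fallback is phrased for near-maximizing sequences, but the inequality must hold for \emph{every} $F_A\to F$. Moreover, showing that the endpoint modification costs only $o(1)$ is the same estimate. Note also that where $\phi_A\ge0$, a concentrating tail \emph{raises} $J_A$ and hurts the inequality, so that is exactly the region to control.

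\textbf{The missing idea.} It is a one-line convexity bound. Your assumption on the weight holds for $\tilde\alpha_A=a/(1-rA)$. From $F_A(1)=0$ you get $0\ge F_A(r)+(1-r)F_A'(r)$, hence
\[
\frac{|F_A(r)|}{\tilde\alpha_A(r)}\ \ge\ \frac{1-rA}{1+r}\,F_A'(r),\qquad r\in(0,1).
\]
So on $\{\phi_A>0\}=\{|F_A|<\tfrac12\tilde\alpha_A\}\cap(0,1)$ one has $F_A'<2$ when $A<\tfrac12$: large slopes occur only where $\phi_A<0$.

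Since $F_A'$ is nondecreasing, $\{\phi_A>0\}\cap(1-\delta,1)$ lies in an interval on which $F_A'\in[F_A'(1-\delta),2)$. By the secant bound $F_A'(1-\delta)\ge-\|F_A\|_\infty/(1-\delta)$, its $\mu_A$-mass is at most $2+2\|F_A\|_\infty$ for $\delta\le\tfrac12$. Combined with $\phi_A^+\le\tfrac12|F_A|$, this yields
\[
\int_{(1-\delta,1)}\phi_A\,d\mu_A\le\bigl(1+\|F_A\|_\infty\bigr)\Bigl(\|F_A-F\|_\infty+\sup_{(1-\delta,1)}|F|\Bigr).
\]
That is what you need, and the argument at $-1$ is symmetric.

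\textbf{The domain.} This is not a side issue, and your claim that infinite energy only produces the harmless value $-\infty$ is wrong. For $F=-c\sqrt{1-r^2}\in\mathcal F$ one has $F^2/a\equiv c^2$, so $J(F)=\tfrac12\int(F')^2=+\infty$, and likewise $J_A(F)=+\infty$. Meanwhile $\int\phi\,d\mu=-\infty$, because the boundary term $F^2F'/a$ blows up.

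With $\mathcal F$ taken literally, the $\liminf$ inequality actually fails: $F_A=-A\sqrt{1-r^2}\to0$ uniformly, but $J_A(F_A)=+\infty>J(0)=0$. You must therefore either work on Lipschitz convex $F$ (equivalently $|F|\lesssim a$), or set $J_A=-\infty$ off that class. The paper tacitly makes the first choice when it claims $|F|\lesssim 1-r^2$ on $\mathcal F$.

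With that restriction, the boundary terms in your representation vanish and the recovery sequence $F_A\equiv F$ works as you say. Your interior weak-$*$ argument plus the tail bound above then completes the proof.
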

 
 \begin{remark}[Remarks concerning the convex integration]
 In the current paper we are not interested in carrying out any convex integration "by hand", but rather build on the framework of \cite{RTE}, and apply a Boussinesq approximation limit to said framework (hence we also omitted the precise mathematical definition of subsolutions). However, of course one could carry out a similar convex relaxation framework applied to the Euler equations already in Boussinesq approximation \eqref{eq:bou}, and then a similar maximization approach for the energy dissipation. Indeed, we claim that this would simply be a generalization/adaptation of all the calculations from \cite{RTE} in a similar manner as was done in \cite{GK_Boussinesq}, in the absence of local energy inequality. While this would be very lengthy to carry out in detail, it would be mathematically straightforward. In particular, we strongly conjecture that the obtained relaxation would yield a matrix inequality of the form $\lambda_{\max}(M(z))\leq \frac{2}{n} e$, with
 $$M(z)=\frac{v\otimes v-\rho(m\otimes v+v\otimes m)+m\otimes m}{1-\rho^2}-\sigma.$$
 Note that here $e$ is part of the unknown functions within the subsolution, hence is allowed to oscillate, thus outplaying the difficulties from \cite{GK_Boussinesq}, when one had to prescribe the energy in some appropriate sense.
 
 Since, following Section 3 of \cite{RTE}, the one-dimensional subsolutions considered satisfy $v\equiv 0$, the above matrix naturally reduces to $$\frac{m\otimes m}{1-\rho^2}-\sigma,$$ and thus further adapting the approach of Section 3 of said paper (noting the lack of domain transformation, similarly to \cite{GK_Boussinesq}), the ansatz $m=m_ne_n$ with $m_n=gAt F(\rho)$ would yield that the energy dissipation is given by (up to a constant depending on $g$, $A$ and $t$) precisely the functional $J$ from \eqref{eq:func0}.
 
 However, in the current paper we do not need to rely on the above (lengthy if made rigorous) arguments, we show that the same dissipation functional appears in the $\Gamma-$limit if one applied the Boussinesq approximation after the relaxation of \cite{RTE}. Nonetheless, we once more emphasize that it is of interest to know that the same structure appears naturally within both approaches.
 \end{remark}

The remainder of the paper is structured as follows. In Section \ref{sec:scale} we give rigorous motivation as to why the above maximization problem is indeed the relevant one in Boussinesq approximation, using a careful rescaling of the construction from \cite{RTE}. 
Then in Section \ref{sec:proof} we show that actually there is $\Gamma-$convergence of the energy dissipation functionals under our rescaling in the Boussinesq approximation limit. While this allows us to infer the existence of a maximizer for the limiting problem, in order to also obtain uniqueness, in Section \ref{sec:proof2} we
actually give a direct proof of the maximization, which in part diverges from the similar one used in \cite{RTE}, using some different integral estimates, which turn out to be more efficient and allow for more simplification.

\section{Obtaining the dissipation functional in the Boussinesq limit}\label{sec:scale}

Let us first start with a quick presentation of the heuristics behind the Boussinesq approximation. Afterwards, we will summarize the results of \cite{RTE}, explain the issues when trying to pass to the Boussinesq limit, and provide a rescaling solution, which will serve as preliminaries for our proof of the maximization of the energy dissipation from Sections \ref{sec:proof} and \ref{sec:proof2}.

The Boussinesq approximation for the inhomogeneous Euler equations
\begin{align}\label{eq:euler_equations}
\begin{split}
\partial_t (\tilde{\rho} v) +\divv (\tilde{\rho}v\otimes v) +\nabla \tilde{p}&=-\tilde{\rho} g e_n,\\
\divv v&=0,\\
\partial_t \tilde{\rho} + \divv (\tilde{\rho} v)&=0,
\end{split}
\end{align}
consists of applying the normalization
$$\tilde{\rho}=\frac{\rho_++\rho_-}{2}(1+\rho A),$$
such that $\tilde\rho\in\{\rho_\pm\}$ is equivalent to $\rho\in\{\pm 1\}$,
and then considering the limit $A\to 0^+$, with the added ansatz that if the Atwood number is multiplied by gravity, then it does not vanish in the limit, since there is buoyancy due to the gravity magnifying the otherwise small density difference.

Then, the Cauchy momentum equation from \eqref{eq:euler_equations} becomes
\begin{align*}
(1+\rho A)(\partial_t v +\divv (v\otimes v)) +\nabla p=-\rho gA e_n,
\end{align*}
where the pressures are related by the (rescaled) hydrostatic balance equation
$$\frac{\rho_++\rho_-}{2} p = \tilde p +\frac{\rho_++\rho_-}{2} x_n.$$
Clearly, passing to the limit $A\to 0^+$, but applying the rule of Boussinesq approximation that $g A$ does not vanish (in particular in the force on the right-hand side), yields the first equation of \eqref{eq:bou}. The fact that the transport equation is invariant under normalization is a trivial exercise left to the reader.

\subsection{Summary of the results from \cite{RTE}}

In Section 3.3 of \cite{RTE} it was shown that (up to an arbitrarily small error) the energy dissipation for the inhomogeneous Euler equations (without Boussinesq approximation) is given by
\begin{align}\label{eq:diss}
E_{tot}(t)-E_{tot}(0)=-\frac{g^3t^4}{4} \int_{\rho_-}^{\rho_+} \left[ \frac{d}{d\rho} \left( \frac{F^2}{\alpha(\rho)}\right) F'(\rho) + \frac{1}{2}(F'(\rho))^2 \right] d\rho,
\end{align}
where
\begin{align}
\alpha(\rho)=\frac{(\rho_+-\rho)\rho-\rho_-)}{\rho_++\rho_--\rho}.
\end{align}

Furthermore, as discussed in Remark 1.2 and Section 1.5.2 of \cite{RTE}, the normalization of the density and the passage to the Boussinesq approximation limit is obtained by using the bijection
\begin{align}\label{eq:bij}
\rho(r)=\frac{\rho_++\rho_-}{2}(1+r A),
\end{align}
as we have also mentioned at the beginning of the section.
In particular, the normalized profile corresponding to the optimal averaged density obtained in \cite{RTE} was given by $\bar r_A (x_n,t)$ with
\begin{align}\label{eq:profA}
\bar r_A\left(\frac{Ag t^2}{3}\xi,t\right)=\frac{1}{A}\left(1- \sqrt{\frac{1-A^2}{1+2\xi}}\right),\quad \xi\in\left(\frac{-1}{1+A},\frac{1}{1-A} \right),
\end{align}
from where one easily obtained in the limit $A\to 0^+$ of the right-hand side the approximation
\begin{align*}
\xi+O(A),\quad \xi\in\left(-1+O(A),1+O(A) \right),
\end{align*}
and hence one can define
$$\bar r_{Bou}\left(\frac{Ag t^2}{3}\xi,t\right)=\xi,\quad \xi\in(-1,1),$$
which is exactly the profile appearing in \cite{GK_Boussinesq}.

We also note that the density $\rho_A=\rho(\bar r_A)$ was obtained as the solution of a Riemann problem (see \eqref{eq:rie} later for the precise form) associated with the (unique) maximizer $F_A(\rho)=-\frac{1}{3}\alpha(\rho)$ of the functional
\begin{align}\label{eq:funcA}
\int_{\rho_-}^{\rho_+} \left[ \frac{d}{d\rho} \left( \frac{F^2}{\alpha(\rho)}\right) F'(\rho) + \frac{1}{2}(F'(\rho))^2 \right] d\rho.
\end{align}

\subsection{Limit issues and finding a rescaling to obtain the right functionals}

While it was trivial to pass to the limit in the right-hand side of \eqref{eq:profA} (simply using L'H\^ opital's rule), for the functionals and their maximizers we immediately observe the following issues. On one hand, we have via a simple calculation that 
\begin{align*}
\alpha(\rho(r))=\frac{\rho_++\rho_-}{2} \frac{A^2}{1-r A}(1-r^2),
\end{align*}
which a priori converges to zero as $A\to 0^+$ (and hence we would have $F_A=-\frac{1}{3}\alpha\to 0$ as well). However, if we first rescale by dividing with $\frac{\rho_++\rho_-}{2}A^2$, we would get the desired behavior
$$\tilde\alpha_A(r):=\frac{2}{(\rho_++\rho_-)A^2}\alpha(\rho(r))=\frac{1-r^2}{1-rA}\xrightarrow{C^\infty} a(r)=1-r^2.$$
This gives us the intuition of the correct scaling also on the level of all possible functions $F$.

Furthermore, to understand the effect of the change of variables \eqref{eq:bij} in \eqref{eq:funcA}, we observe that
\begin{align*}\frac{d\rho}{dr}=\frac{\rho_++\rho_-}{2}A,
\end{align*}
and thus, for any differentiable $G:[\rho_-,\rho_+]\to\mathbb R$, setting
$$\tilde G(r):=\frac{2}{(\rho_++\rho_-)A^2}G(\rho(r)),$$ there holds
\begin{align}\label{eq:diffch}
\frac{d}{dr} \tilde G(r)=\frac{d}{d
\rho}G(\rho(r)) \frac{1}{A}.
\end{align}
In particular, note that we would have
$$\widetilde{\left(\frac{F^2}{\alpha}\right)}=\frac{2}{(\rho_++\rho_-)A^2} \frac{F(\rho(r))^2}{\alpha(\rho(r))}=\frac{\left(\tilde F(r)\right)^2}{\tilde\alpha_A(r)}.$$
Note that the above identities also extend to weak derivatives, in the appropriate sense, as long as all involved integrals are well-defined.

Thus, if we apply the above with $G=F$ and then $G=\frac{F^2}{\alpha}$ in \eqref{eq:funcA}, we may rigorously carry out the change of variables corresponding to \eqref{eq:bij} in order to get
\begin{multline*}
\int_{\rho_-}^{\rho_+} \left[ \frac{d}{d\rho} \left( \frac{F^2}{\alpha(\rho)}\right) F'(\rho) + \frac{1}{2}(F'(\rho))^2 \right] d\rho = \\ \int_{-1}^1 \left[ A\frac{d}{dr} \left( \frac{\tilde F^2}{\tilde{\alpha}_A(r)}\right)A\tilde F'(r) + \frac{1}{2}\left(A\tilde F'(r)\right)^2 \right] \frac{\rho_++\rho_-}{2}A dr
\\= \frac{\rho_++\rho_-}{2}A^3 \int_{-1}^1 \left[ \frac{d}{dr} \left( \frac{\tilde F^2}{\tilde{\alpha}_A(r)}\right)\tilde F'(r) + \frac{1}{2}\left(\tilde F'(r)\right)^2 \right] dr.
\end{multline*}
Hence, keeping in mind that $\tilde\alpha_A(r)\to a(r)$ as $A\to 0^+$, this allows us to rigorously define for any $A>0$ the functional
\begin{align}\label{eq:scale}
J_A(\tilde F):=\frac{8(E_{tot}(0)-E_{tot}(t))}{(\rho_++\rho_-)g^3A^3 t^4}=\int_{-1}^1 \left[ \frac{d}{dr} \left( \frac{\tilde F^2}{\tilde{\alpha}_A(r)}\right)\tilde F'(r) + \frac{1}{2}\left(\tilde F'(r)\right)^2 \right] dr,
\end{align}
which still corresponds to the energy dissipation in the case of the inhomogeneous Euler equations without Boussinesq approximation (but in the normalized and rescaled setting). In the next section we will show that 
$$-J_A \xrightarrow{\Gamma(C^0)} -J_{Bou}:=-J\text{ as }A\to 0^+,$$
where $J$ is exactly the functional given in \eqref{eq:func0}.

We conclude this section with two remarks. On one hand, it should be noted that the scaling obtained in \eqref{eq:scale} is in harmony with the construction from \cite{GK_Boussinesq}, where a different relaxation was used, with the selection criterion of maximizing the initial energy dissipation instead of the total dissipation. Nonetheless, as already mentioned, the same limiting optimal density profile (here corresponding to $F_{Bou}=-\frac{1}{3}a$) was obtained in said paper, and the initial energy dissipation also scaled like $-g^3 A^3 t^4$, see e.g. Example 4.1. from \cite{GK_Boussinesq}.

Finally, in order to conclude Corollary \ref{cor:lambda_1_3}, given Theorem \ref{thm:var}, it suffices to note that the Riemann problem \eqref{eq:general_hyperbolic_conservation_law_prop14} follows naturally given the transformations we introduced in this section. Indeed, from Corollary 1.3 of \cite{RTE}, one has that $\rho_A=\rho(\bar r_A)$ satisfies the conservation law
\begin{align}\label{eq:rie}
\partial_t\rho_A+g t\partial_{x_n} (F_A(\rho_A))=0,
\end{align}
with $F_A(\rho)=-\frac{1}{3}\alpha(\rho)$. Using \eqref{eq:bij} and \eqref{eq:diffch}, one gets
\begin{align*}
0&=\frac{\rho_++\rho_-}{2}A\partial_t \bar r_A -\frac{1}{3} g t \frac{d}{d\rho}\alpha(\rho(\bar r_A)) \frac{\rho_++\rho_-}{2}A \partial_{x_n} \bar r\\&=\frac{\rho_++\rho_-}{2}A \left (\partial_t \bar r_A-\frac{1}{3} g t \frac{d}{dr}\tilde\alpha_A(\bar r_A) A \partial_{x_n} \bar r_A\right)=\frac{\rho_++\rho_-}{2}A \left (\partial_t \bar r_A-\frac{1}{3} g A t \partial_{x_n} \tilde\alpha_A(\bar r_A) \right), 
\end{align*}
hence in the Boussinesq limit $\tilde\alpha_A(r)\to a(r)$ and $r_A\to r_{Bou}$, one reobtains
$$\partial_t \bar r_{Bou}-\frac{1}{3} g A t \partial_{x_n} a(\bar r_{Bou})=0\quad \Leftrightarrow\quad  \partial_t \bar r_{Bou}+ g A t \partial_{x_n} (F_{Bou}(\bar r_{Bou}))=0.$$

\section{Proving the $\Gamma-$convergence of the dissipation functionals in the Boussinesq limit}\label{sec:proof}


For the sake of completeness, let us first show that $J$ is in fact well-defined over the whole of $\mathcal F$, although this is a similar argument to that in \cite{RTE}. Since any $ F \in \mathcal F$ is convex and vanishes at the endpoints, it follows that 
$$|F(r)|\lesssim (1-r^2)$$
near the boundary points,
 hence $\frac{F}{a}\in L^\infty(-1,1)\cap W^{1,\infty}_{loc}(-1,1)$ and $\frac{F^2}{a}\in  W^{1,\infty}(-1,1)$. This easily yields that $J(F)$ is well-defined. Similarly one can show that $J_A$ is also well-defined on $\mathcal F$. Now we can prove our result on $\Gamma-$convergence of the functionals.

\begin{proof}[Proof of Theorem \ref{thm:gamma}]
 
\textbf{Step 1.} 
 
To show $\Gamma-$convergence, we will first show locally uniform convergence in $W^{1,\infty}$. Let $M>0$, for $F\in\mathcal F$ such that $\|F\|_{W^{1,\infty}}\leq M$
we have in fact
$$|F(r)|\leq M (1-r^2)$$
near the boundary points $\pm 1$.
We may then further write
\begin{align*}
\frac{d}{dr} \left( \frac{F^2}{\tilde{\alpha}_A(r)}\right)-\frac{d}{dr} \left( \frac{ F^2}{a(r)}\right) = - A  \frac{d}{dr} \left(r \frac{ F^2}{a(r)}\right),
\end{align*}
hence
\begin{align*}
J_A(F)-J(F)=-A\int_{-1}^1 \frac{d}{dr} \left(r \frac{ F^2(r)}{a(r)}\right) F'(r) \, dr.
\end{align*}
Let $G(r):=\frac{ F^2(r)}{a(r)}$, recalling $|F(r)|\leq M (1-r^2)$ near the boundary points $\pm 1$, we may estimate
\begin{align*}
\left|\frac{d}{dr} \left(r \frac{ F^2(r)}{a(r)}\right)\right| \leq \|G\|_{W^{1,\infty}}\lesssim C M ^2,
\end{align*}
for a universal constant $C>0$ independent of $F$.
Thus, we obtain
\begin{align*}
|J_A(F)-J(A)|\leq 2C M^3 A,
\end{align*}
which implies that $J_A \to J$ locally uniformly in $\mathcal F$ with respect to the $W^{1,\infty}$ topology as $A\to 0^+$.

\textbf{Step 2.} 

Let us show that $J$ is continuous with respect to the $C^0$ topology on $\mathcal F$.
Let $F_n \in \mathcal F$ such that $F_n \to F$ uniformly, due to convexity of all $F_n$, one has $F'_n\to F'$ almost everywhere, and the derivatives are uniformly bounded in $L^\infty$. Furthermore,
$$G_n(r):=\frac{F_n(r)^2}{1-r^2}$$ 
are uniformly Lipschitz and converge uniformly to $G(r):=\frac{F(r)^2}{1-r^2}$.

Hence, $G'_n$ weak-$\star$ converge to $G'$ in $L^\infty$. Since we have
\begin{align*}
J(F_n)=\int_{-1}^1 \left(G'_n F'_n + \frac{1}{2} (F'_n)^2 \right) \, dr,
\end{align*}
recalling the almost everywhere convergence and uniform boundedness of $F'_n$, 
one can apply the dominated convergence theorem to obtain $J(F_n)\to J(F)$.

\textbf{Step 3.}

We can now show the liminf (in)equality.
Let now $A\to 0^+$, and $F_A \in \mathcal F$ such that $F_A \to F$ uniformly. Uniform convergence of convex functions on a compact interval implies a uniform Lipschitz bound, hence there exists $M>0$ such that 
$$\|F_A\|_{W^{1,\infty}} \leq M,\ \forall A\in(0,1).$$

From Step 1. we get
$$|J_A(F_A)-J(F_A)|\leq 2 C M^3 A,$$
but from Step 2. we know that
$J(F_A)\to J(F)$, therefore
$$\liminf_{A\to 0^+} -J_A(F_A)=-J(F).$$

\textbf{Step 4.}

For the recovery sequence we can simply take for any given $F\in\mathcal F$, $F_A\equiv F$, so we have
\begin{align*}
\lim_{A\to 0^+}-J_A(F_A)=\lim_{A\to 0^+}-J_A(F)=-J(F),
\end{align*}
once more by Step 1.

This concludes the proof of the $\Gamma-$convergence.

\end{proof}

Therefore, we have made the convergence of the functionals rigorous.
The results of \cite{RTE} and our analysis from Section \ref{sec:scale} yield that, for any $A\in(0,1)$, the unique maximizer of $J_A$ is exactly $-\frac{1}{3}\tilde\alpha_A(r)$, which $C^\infty$-converges to $-\frac{1}{3}a(r)$ as $A\to 0^+$. Thus, by $\Gamma-$convergence of $-J_A$ to $-J$,  $-\frac{1}{3}a(r)$ must be a maximizer of $J$. However, this method does not yield uniqueness. For the latter, we will employ a direct approach in the next section.

\section{Direct proof of the unique maximization property}\label{sec:proof2}

Finally, we are in position to prove Theorem \ref{thm:var}. As we have seen in Section \ref{sec:proof}, the only missing ingredient is the uniqueness of the maximizer, which we will show by direct integral estimates, that actually also give an alternative proof of the existence of the maximizer.

We note that while the $\Gamma-$convergence obtained in Section \ref{sec:proof} is a property of the model and the Boussinesq limit that is of interest in of itself, this direct proof of the maximization result has the advantage of not relying on needing to know that the functionals $J_A$ all have (unique) maxima.

Furthermore, while the proof is based on similar ideas as that of Lemma 3.4 from \cite{RTE}, it is not simply a word-by-word adaptation of said proof, in its second half we diverge from the construction in \cite{RTE}, using more efficient estimates, so that unlike said paper, here we do not need to keep going back and forth between the $H$ and $F$ viewpoints. Hence, our proof is a refinement of the proof of Lemma 3.4 from \cite{RTE}.

\begin{proof}[Proof of Theorem \ref{thm:var}]

We start the proof by using the same equivalent formulation as in \cite{RTE}, by replacing $F(r)=-a(r)H(r)$, to get
\begin{align*}
    J(-aH) &= \int_{-1}^1 \left[ \frac{1}{dr} \left(\frac{a^2H^2}{a}\right)(-a'H-aH')  + \tfrac{1}{2} \bigl(-a'H-aH'\bigr)^2 \right] \, dr \\
    &=  \int_{-1}^1 \left[ -\left(a'H^2+2aHH'\right)(-a'H-aH')  + \tfrac{1}{2} \bigl(-a'H-aH'\bigr)^2 \right] \, dr \\
    &=  \int_{-1}^1 \frac{1}{2}\left[ a^2(1-4H)(H')^2 -aa''H^2(1-2H) \right]\, dr =:\tilde{J}(H).
\end{align*}
Thus, we will show that for any
 $$H\in \mathcal H = \Bigl\{ H\in  L^\infty(-1,1)\cap W^{1,\infty}_{loc}(-1,1)  \,\Big|\, H\geq 0,\; -aH \;\text{convex}\Bigr\}$$ there holds
$$\tilde J (H)\leq \tilde J (1/3),$$
and the inequality is strict if $H\neq 1/3$.

To do so, we continue to follow some of the ideas from \cite{RTE}, but then diverge half-way through the proof, using more efficient direct inequalities, to simplify the second half thereof.

We may split
\begin{multline}\label{eq:est}
    \tilde{J}(H) - \tilde{J}(1/3)  \\= \int_{\{ r : H(r) > \frac{1}{4} \}} \left[ \frac{1}{2} a^2 (1 - 4H)(H')^2 - \frac{1}{2} a a'' H^2 (1 - 2H) +\frac{1}{54} a a''\right] \, dr \\
      + \int_{\{ r : H(r) < \frac{1}{4} \}} \left[ \frac{1}{2} a^2 (1 - 4H)(H')^2 - \frac{1}{2} a a'' H^2 (1 - 2H)+\frac{1}{54} a a'' \right] \, dr. 
\end{multline}

The first term has zero as its upper bound, since 
$1-4H \leq0$, $-aa'' \geq 0$, and the maximum of $H^2 (1 - 2H)$ is realized in  $H=1/3$.

For the second term, let $-1<r_1<r_2<1$, and introduce the following restricted functional:
\begin{align}\label{eq:func}\tilde J_{r_1,r_2} (H)=\int_{r_1}^{r_2} \frac{1}{2}\left[ a^2(1-4H)(H')^2 -aa''H^2(1-2H) \right]\, dr.
\end{align}
We note that $\{ r : H(r) < \frac{1}{4} \}$ can be written as the union of maximally disjoint intervals, for which any $r_0$ of its endpoints yields 
 $H(r_0)=1/4$ or $r_0\in\{\pm 1\}$. In both cases one gets $a(r_0)H(r_0)^k=a(r_0)(1/4)^k,$ for any $k\geq 0$.

Let $(r_1,r_2)\subset\{ r : H(r) < \frac{1}{4} \}$ be such an interval.
For simplicity, let us assume that $aH$ is $C^2$ on $(r_1,r_2)$, the general case can be inferred either by approximation and mollification, or by interpreting $(aH)''$ as a measure and the boundary terms in a trace sense.

By partial integration, we obtain
\begin{multline*}
    \int_{r_1}^{r_2} a^2 (1 - 4H)(H')^2 \, dr = \\-  \int_{r_1}^{r_2} [a^2 H(1-4H)H'' +2aa'(1-4H)HH'-4a^2(H')^2H] \, dr=
    \\-  \int_{r_1}^{r_2} aH(1-4H)\left[2a'H'+aH''\right] \, dr+\int_{r_1}^{r_2} 4a^2(H')^2H \, dr.
\end{multline*}
Now, using the fact that
$$a''H =(aH)'' -2a'H'-aH'',$$
we get
\begin{align*}
    \int_{r_1}^{r_2} aa''H^2(1-2H) \, dr = \int_{r_1}^{r_2} aH(1-2H)\left[(aH)''-2a'H'-aH''\right] \, dr. 
\end{align*}
Hence the value of the restricted functional \eqref{eq:func} becomes
\begin{align*}
     \tilde{J}_{r_1,r_2}(H)&=\frac{1}{2}\int_{r_1}^{r_2} \left[ 4a^2(H')^2H +2aH^2(2a'H'+aH'')\right]\, dr \\
     & \quad - \frac{1}{2} \int_{r_1}^{r_2} a H (1 - 2H) (a H)'' \, dr.
\end{align*}
Thus, using the identity
\begin{align*}
    \frac{d}{dr}(a^2 H^2 H')=2a^2(H')^2H+2aa'H^2H'+a^2H^2 H''
\end{align*}
we may further write the previous as
\begin{align*}
    \tilde{J}_{r_1,r_2}(H) &=  \int_{r_1}^{r_2} \frac{d}{dr}(a^2 H^2 H') \, dr - \frac{1}{2} \int_{r_1}^{r_2} a H (1 - 2H) (a H)'' \, dr\\
    &= \left[ a^2 H^2 H' \right]_{r_1}^{r_2} - \frac{1}{2} \int_{r_1}^{r_2} a H (1 - 2H) (a H)'' \, dr\\
    &\leq - \frac{1}{16} \int_{r_1}^{r_2} a (aH)'' \, dr + \left[ \frac{1}{16} a^2 H' \right]_{r_1}^{r_2} \\
    &= \frac{1}{16} \int_{r_1}^{r_2} a' (aH)' \, dr + \left[ \frac{1}{16} \left( a^2 H' - a (aH)' \right) \right]_{r_1}^{r_2} \\
    &= -\frac{1}{16} \int_{r_1}^{r_2} a'' a H \, dr + \left[ \frac{1}{16} \left( a^2 H' - a (aH)' + a' aH \right) \right]_{r_1}^{r_2} \\
    &\leq - \frac{1}{64} \int_{r_1}^{r_2} a'' a \, dr,
\end{align*}
where we have used that $-(aH)''\leq 0$, and that $H(1 - 2H)$ has a maximal value of $\frac{1}{8}$, since $H \in [0; \frac{1}{4}]$ is maximal for $H = \frac{1}{4}$.

From here we get that
$$\tilde{J}_{r_1,r_2}(H)-\tilde{J}_{r_1,r_2}(1/3)=\left(\frac{1}{54}- \frac{1}{64} \right)\int_{r_1}^{r_2} a'' a \, dr\leq 0.$$
And the above yield 
$\tilde{J}(H)\leq \tilde{J}(1/3)$,
as well as
$\tilde{J}(H) < \tilde{J}(1/3)$ if the set
$\{ r : H(r) < \frac{1}{4} \}$ is not empty. 
In the case when this set is empty, the second term in \eqref{eq:est} is zero, and thus we would once more get the strict inequality if $H\neq 1/3$.

\end{proof}

\section*{Acknowledgements} 

J. J. Kolumb\'an 
was supported by the J\'anos Bolyai Research Scholarship of the Hungarian Academy of Sciences.

\section*{Statements and Declarations}

On behalf of all authors, the corresponding author states that there is no conflict of interest.

\section*{Data availability} We do not analyse or generate any datasets.

\end{document}